\title{On the Lambek embedding and the category of product-preserving presheaves}
\author{Peng Fu
\institute{Dalhousie University}
\and
Kohei Kishida
\institute{University of Illinois at Urbana-Champaign}
\and
Neil J. Ross
\institute{Dalhousie University}
\and
Peter Selinger
\institute{Dalhousie University}
}
\begin{document}
\maketitle

\begin{abstract}
  It is well-known that the category of presheaf functors is complete
  and cocomplete, and that the Yoneda embedding into the presheaf
  category preserves products. However, the Yoneda embedding does not
  preserve coproducts. It is perhaps less well-known that if we
  restrict the codomain of the Yoneda embedding to the full
  subcategory of limit-preserving functors, then this embedding
  preserves colimits, while still enjoying most of the other useful
  properties of the Yoneda embedding. We call this modified embedding
  the \textit{Lambek embedding}.  The category of limit-preserving
  functors is known to be a reflective subcategory of the category of
  all functors, i.e., there is a left adjoint
  for the inclusion functor. In the literature, the existence of this
  left adjoint is often proved non-constructively, e.g., by an
  application of Freyd's adjoint functor theorem. In this paper, we
  provide an alternative, more constructive proof of this fact.  We
  first explain the Lambek embedding and why it preserves coproducts.
  Then we review some concepts from multi-sorted algebras and
  observe that there is a one-to-one correspondence between
  product-preserving presheaves and certain multi-sorted term
  algebras. We provide a construction that freely turns any
  presheaf functor into a product-preserving one, hence giving an
  explicit definition of the left adjoint functor of the inclusion.
  Finally, we sketch how to extend our method to prove that the
  subcategory of limit-preserving functors is also reflective.
\end{abstract}

\section{Introduction}

Let $\A$ be a small category. Recall that a \emph{presheaf} over $\A$
is just a functor $F:\A\op\to\Set$. The category $\SetAop$ of
presheaves has many desirable properties. It is complete and
cocomplete, cartesian-closed, and even a topos. If $\A$ is monoidal,
then $\SetAop$ is monoidal closed, where tensor products and
exponentials are given by Day's convolution {\cite{day1970closed}}.
Also, the Yoneda embedding $y : \A \to \SetAop$ is full and
faithful. These properties make the presheaf category a natural
candidate for modeling linear functional programming languages
{\cite{FKRS-model-2022,malherbe2013categorical,RS2017-pqmodel}}.

Although the Yoneda embedding preserves all existing products (and
more generally, limits), it does not preserve coproducts. In some
situations, it is useful to have a version of the Yoneda embedding
that also preserves coproducts. For example, in our work on the
categorical semantics of quantum programming languages
{\cite{FKRS-model-2022}}, we start with a base category that models
quantum operations, which is monoidal but not necessarily monoidal
closed.  In order to account for lambda abstraction (i.e.,
\textit{currying}), we can embed the base category into its presheaf
category. Sometimes the base category already has coproducts, and it is
natural, and often technically necessary, to require the embedding to
also preserve these coproducts. Fortunately, in this situation, there
is a variant of the Yoneda embedding, which we call the
\textit{Lambek embedding} \cite{lambek2006completions}, that achieves
exactly that.

Before we explain the Lambek embedding, let us first recall why the
Yoneda embedding does not preserve coproducts. Let us consider a small
category $\A$ with a distinguished object $I$ and a coproduct $I+I$.
Let $y : \A \to \SetAop$ be the Yoneda embedding, given by
$y(A) = \Hom(-, A)$. We must show that
\[
  y(I) \xrightarrow{y(\inl)} y(I+I) \xleftarrow{y(\inr)} y(I)
\]
is not a coproduct cone in $\SetAop$.  Suppose it is a coproduct
cone. Since we know that
\[
  y(I) \xrightarrow{\inl} y(I)+y(I) \xleftarrow{\inr} y(I)
\]
is also a coproduct cone, there exists a unique isomorphism
$f : y(I+I) \to y(I)+y(I)$ such that the following composition is
identity:
\[
  y(I)+y(I) \xrightarrow{[y(\inl), y(\inr)]} y(I+I) \xrightarrow{~~f~~}
  y(I)+y(I).
\]
The Yoneda lemma states that $\SetAop(y(A),F) \iso F(A)$ for all
$F:\A\op\to\Set$ and $A \in \A$. In particular, we have
\[ \SetAop(y(I+I), y(I)+y(I))
  \iso \Hom(I+I, I) + \Hom(I+I, I).
\]
Since an element in the disjoint union
$\Hom(I+I, I) + \Hom(I+I, I)$ either belongs to the
left component or the right component, it easily follows that
$f \in \SetAop(y(I+I), y(I)+y(I))$ must be a natural
transformation that either maps its entire domain to the left
component of its codomain, or to the right component. This implies
that $f \circ [y(\inl), y(\inr)] \neq \id$.

We now define the Lambek embedding. Let $\SetAopx$ be the full
subcategory of $\SetAop$ consisting of product-preserving
functors. Note that a product in $\A\op$ is a coproduct in $\A$, so
that a product-preserving functor $F:\A\op\to\Set$ is the same thing
as a contravariant functor that maps coproducts of $\A$ to products of
$\Set$. Every functor of the form $y(A) = \Hom(-,A)$ is product-preserving in this sense, because we have
$\Hom(B+C, A) \iso \Hom(B, A) \times \Hom(C, A)$ for all $B, C\in \A$.
Therefore, the image of the Yoneda embedding $y:\A\to\SetAop$ is
entirely contained in the subcategory $\SetAopx$. The \emph{Lambek
  embedding} $\ybar:\A\to\SetAopx$ is defined to be the restriction of
$y$ to this codomain, i.e., the unique functor making the following
diagram commute.
\[
  \begin{tikzcd}
    \A
    \arrow[d, "\ybar"]
    \arrow[dr, "y"]& \\
    {\SetAopx} \arrow[r, "i", hook]& \SetAop
  \end{tikzcd}
\]
We can show that the Lambek embedding preserves all coproducts that
exist in $\A$. For example, if $A+B$ is a coproduct in $\A$, by the
Yoneda lemma, for any $G \in \SetAopx$, we have
\[
  \begin{array}{lll}
    \SetAopx(\ybar(A + B), G)
    & = & \SetAopx(\Hom(-, A + B), G) \\
    & \iso & \SetAop(\Hom(-, A + B), G)  \\
    & \iso & G(A+B) \\
    & \iso^{*} & G(A)\times G(B)\\
    & \iso & \SetAop(\Hom(-, A), G) \times \SetAop(\Hom(-, B), G)\\
    & \iso & \SetAopx(\Hom(-, A), G) \times \SetAopx(\Hom(-, B), G)\\
    & \iso & \SetAopx(\ybar(A) + \ybar(B), G),
  \end{array}
\]
and therefore $\ybar(A + B) \iso \ybar(A) + \ybar(B)$. Note that the
step marked ``*'' uses the assumption that $G$ is product-preserving.

Like the category of presheaves, the full subcategory $\SetAopx$ has
many desirable properties. It is complete and cocomplete, and it is
monoidal closed if $\A$ is monoidal \cite{day1972reflection}. The
proofs of these properties rely on the fact that $\SetAopx$ is a
reflective subcategory of $\SetAop$, i.e., there is an adjunction
$L \dashv i : \SetAopx \to \SetAop$.  The existence of the left
adjoint functor $L$ is not at all obvious. For example, Kennison's
original proof {\cite{kennison1968limit}} uses Freyd's adjoint
functor theorem {\cite{freyd1964abelian}}, which requires a solution
set condition and the axiom of choice. 

In this paper, we give an explicit construction of the left adjoint
functor $L : \SetAop \to \SetAopx$. The methods of this paper are
probably already familiar to researchers who specialize in categorical
algebra. However, we believe that outside this immediate field, the
connection between presheaves and multi-sorted type theories is
perhaps not as well-known as it should be, and that an explicit
description, such as the one we give here, will be beneficial.

The paper is organized as follows. In Section~\ref{sec:background},
we briefly review some concepts from multi-sorted algebras.  In
Section~\ref{sec:theory-a}, we show that there is a one-to-one
correspondence between presheaves and certain multi-sorted
algebras. In Section~\ref{sec:product-preserving}, we consider
multi-sorted algebras corresponding to product-preserving functors.
In Section~\ref{sec:functor-L}, we define a functor $L$ by
constructing a multi-sorted term algebra, and we show that $L$ is the
left adjoint of the inclusion functor $i$. In
Section~\ref{sec:limit-preserve}, we sketch how to extend our method to
limit-preserving functors.

\section{Background on multi-sorted algebras}
\label{sec:background}

The definition of a multi-sorted algebra starts by assuming we are
given a collection of \emph{sorts}, usually denoted $A,B,C$, etc. An
\emph{arity} is an $(n+1)$-tuple of sorts. A \emph{signature} is a set
of \emph{function symbols}, together with an assignment of an arity to
each function symbol. We usually write $f : A_1,\ldots,A_n\to B$ to
indicate that the function symbol $f$ has arity
$\p{A_1,\ldots,A_n,B}$. When the sorts $A_1,\ldots,A_n,B$ are not
important, we also sometimes say that $f$ is an $n$-ary function.
In case $n=0$, if $c$ is a function symbol of arity $\p{B}$, we also
write $c:B$ and call $c$ a \emph{constant symbol} of sort $B$.

\begin{definition}
  \label{def:sigma-algebra}
  Let $\Sigma$ be a signature. A $\Sigma$-algebra $\T$ consists of the
  following data:
  \begin{itemize}
  \item For each sort $A$, a set $\T(A)$. The sets $\T(A)$ are called
    the \emph{carriers} of the algebra.
  \item For each function symbol $f : A_1,\ldots,A_n\to B$ in
    $\Sigma$, a function
    $\T(f) : \T(A_1)\times\ldots\times\T(A_n)\to\T(B)$.
  \end{itemize}
\end{definition}

\begin{definition}
  Let $\cA, \cB$ be $\Sigma$-algebras. A $\Sigma$-homomorphism
  $\phi : \cA \to \cB$ consists of:
  \begin{itemize}
  \item A function $\phi_{A} : \cA(A)\to \cB(A)$ for each sort $A$,
    such that
  \item for each function symbol $f : A_{1}, \ldots , A_{n}\to B \in
    \Sigma$ and all $a_{i} \in \cA(A_{i})$,
    we have \[\phi_{B} (\cA(f)(a_{1},\ldots, a_{n})) = \cB(f)(\phi_{A_{1}}(a_{1}), \ldots, \phi_{A_{n}}(a_{n})).\]
  \end{itemize}  
\end{definition}

One example of a $\Sigma$-algebra is a term algebra. We first define
well-sorted terms. For each sort $A$, we assume that we are given a
countable set $X_{A}$ of \emph{variables}. We further assume that
these sets of variables are pairwise disjoint, i.e., $X_{A}\cap
X_B=\emptyset$ when $A\neq B$.  The set of $\Sigma$-terms is freely
generated from variables and function symbols, in the obvious
sort-respecting way.
\begin{definition}
  The set of \emph{$\Sigma$-terms of sort $A$} is defined inductively
  as follows. We write $\Sigma\vdash t:A$ to mean that $t$ is a
  $\Sigma$-term of sort $A$.
  \[
    \infer{\Sigma\vdash x : A}{x \in X_{A}}
    \qquad
    \infer{\Sigma\vdash f(t_1,\ldots,t_n) : B.}
    {
      f : A_1,\ldots,A_n\to B \in \Sigma
      \qquad \Sigma \vdash t_1 : A_1
      \quad \ldots
      \quad \Sigma \vdash t_n : A_n
    }
  \]
\end{definition}

We say a term is {\em closed} if it contains no variables.  An
\emph{equation} over a signature $\Sigma$ is a triple $\p{s,t,A}$,
where $s,t$ are $\Sigma$-terms of sort $A$. We usually write an
equation as $s\approx t:A$. If $E$ is a set of equations, we write
$E\vdash s\approx t:A$ to mean that the equation $s\approx t:A$
follows from the equations in $E$.

\begin{definition}
  The relation
  $E\vdash s\approx t:A$ is inductively defined by the following rules.
  \[
    \begin{tabular}{c}
      \infer[\rulename{ax}]{E \vdash s\approx t : A}
      {(s\approx t:A)\in E}
      \qquad
      \infer[\rulename{subst}]{E\vdash s[r/x]\approx t[r/x]:B}
      {x\in X_{A}\quad \Sigma\vdash r:A\quad E\vdash s\approx t:B}
      \\[1ex]
      \infer[\rulename{refl}]{E\vdash s \approx s : A}{\Sigma\vdash s : A}
      \qquad
      \infer[\rulename{symm}]{E\vdash s \approx t : B}{E\vdash t \approx s : B}
      \qquad
      \infer[\rulename{trans}]{E\vdash r \approx t : B}{E\vdash r \approx s : B  & E\vdash s \approx t : B}
      \\[1ex]
      \infer[\rulename{cong}]{E\vdash f(s_1,\ldots, s_n) \approx f(t_1,\ldots, t_n)  : B}
      {E\vdash s_i \approx t_i : A_i \text{ for all $i$}\qquad
      f : A_1,\ldots,A_n\to B}
    \end{tabular}
  \]
\end{definition}
In the rule {\rulename{subst}}, $s[r/x]$ denotes the term obtained
from $s$ by replacing all occurrences of the variable $x$ by the term
$r$. This rule ensures that variables are generic, i.e., if an
equation holds for a variable, then it holds for any term. 

\begin{definition}
  Given a signature $\Sigma$, the \emph{open term algebra} $\T =
  \Term(\Sigma)$ is defined as follows: $\T(A)$ is the set of
  $\Sigma$-terms of sort $A$, and for each function symbol
  $f:A_1,\ldots,A_n\to B$ in $\Sigma$, the function
  $\T(f):\T(A_1)\times\ldots\times\T(A_n)\to\T(B)$ is defined by
  $\T(f)(t_1,\ldots,t_n) = f(t_1,\ldots,t_n)$. The \emph{closed term
    algebra} $\CTerm(\Sigma)$ is defined similarly, except that the
  carriers consist only of closed terms.
\end{definition}
If we are also given
a set of equations $E$, we can define the open and closed
\emph{quotient term algebras} $\Term(\Sigma)/E$ and
$\CTerm(\Sigma)/E$, which are defined in the same way except that the
carriers consist of $\approx$-equivalence classes of (open or closed)
terms. The $\rulename{cong}$ rule ensures that the function $\T(f)$ is
well-defined on such equivalence classes.

\begin{definition}
  \label{def:sigma-e-algebra}
  We say that a $\Sigma$-algebra $\T$ \emph{satisfies} an equation
  $s\approx t:A$, or equivalently, that the equation is \emph{valid}
  in $\T$, if for any $\Sigma$-homomorphism $\phi : \Term(\Sigma) \to
  \T$, we have $\phi_{A}(s) = \phi_{A}(t)$. If $E$ is a set of
  equations, we say that $\T$ satisfies $E$ if it satisfies all the
  equations in $E$. If $\T$ is a $\Sigma$-algebra satisfying a set of
  equations $E$, we also say that $\T$ is a
  \emph{$(\Sigma,E)$-algebra}.
\end{definition}

By a multi-sorted \emph{theory}, we mean all of the above data, i.e.,
a collection of sorts, a signature, and a set of equations.  Note that
$\Term(\Sigma)$ and $\CTerm(\Sigma)$ are $\Sigma$-algebras and
$\Term(\Sigma)/E$ and $\CTerm(\Sigma)/E$ are $(\Sigma,E)$-algebras.

\section{The theory associated to a category \texorpdfstring{$\A$}{A}}
\label{sec:theory-a}

In the rest of this paper, we will work with a small category
$\A$. For convenience and without loss of generality, we will work
with the functor category $\SetA$ rather than $\SetAop$ unless
otherwise noted.

\begin{definition}\label{def:theory-a}
  To the category $\A$, we associate a multi-sorted theory as follows.
  The sorts are the objects of $\A$. The signature $\Sigma_{\A}$ contains
  a unary function symbol $f:A\to B$ for every morphism $f:A\to B$ in
  $\A$. The set of equations $E_{\A}$ consists of the following:
  \begin{rulelist}
  \item[id] $\id(x)\approx x:A$, whenever $\id:A\to A$ is an
    identity morphism and $x$ is a variable of sort $A$.
  \item[comp] $(f\circ g)(x)\approx f(g(x)):C$, whenever
    $f:A\to B$ and $g:B\to C$ are morphisms and $x$ is a variable of
    sort $A$.
  \end{rulelist}
\end{definition}

\begin{remark}\label{rem:algebras-functors}
  There is a one-to-one correspondence between
  $(\Sigma_{\A},E_{\A})$-algebras and functors $T:\A\to\Set$.  Indeed,
  if $T : \A \to \Set$ is functor, then $T$ is a $\Sigma_{\A}$-algebra
  because for each $A \in \A$, there is a set $T(A)$. For each
  function symbol $f : A\to B$ in $\A$, since $f$ is also a morphism
  of $\A$, there is a function $T(f) : T(A) \to T(B)$. The equations
  $\rulename{id}$ and $\rulename{comp}$ are satisfied by
  functoriality. Conversely, every $(\Sigma_{\A},E_{\A})$-algebra
  gives rise to a functor, and the two assignments are mutually
  inverse.
\end{remark}

\section{Product-preserving functors}
\label{sec:product-preserving}

Recall from the introduction that we are interested in taking a
functor $F:\A\to\Set$, and constructing another functor
$L(F):\A\to\Set$ that is product-preserving. Rather than requiring
$L(F)$ to necessarily preserve \emph{all} products, we will consider
the slightly more general problem of preserving some
\emph{distinguished} class of products. Therefore, we will assume that
we are given a small category $\A$ and some particular collection
$\Cones$ of product cones in $\A$. We say that a functor is
$\Cones$-product-preserving when it preserves all of the product cones
in $\Cones$.

On one extreme, we may of course assume that $\A$ has all products,
and that $\Cones$ is the set of all product cones in $\A$. In that
case, a $\Cones$-product-preserving functor is just the same thing as
a product-preserving functor in the ordinary sense. On the other
extreme, $\Cones$ could just consist of a single product cone
$A\leftarrow A\times B\rightarrow B$, in which case a functor is
$\Cones$-product-preserving if it preserves just that one product.

For simplicity and ease of exposition, we will assume in the following
that $\Cones$ is a collection of binary product cones. However, the
same construction works for $n$-ary products for any finite $n$. It
also works for infinite products, assuming that we extend the notion
of term algebras to allow terms of infinite arity (which causes no
problems). Product cones for $n=0$, i.e., terminal objects, are of
course also a special case.

We first define a theory of product-preserving functors.

\begin{definition}\label{def:theory-a-cones}
  To a small category $\A$ with a distinguished collection $\Cones$ of
  (binary) product cones, we associate a multi-sorted theory as
  follows.  The signature $\Sigma_{\Cones}$ has the same function
  symbols as $\Sigma_{\A}$, and additionally, for each product cone
  $A\xleftarrow{\fst}C\xrightarrow{\snd} B$ in $\Cones$, we add a
  function symbol
  \[
  \pair : A,B \to C.
  \]
  Note that there are already function symbols for all the morphisms
  of $\A$, including $\fst:C\to A$ and $\snd:C\to B$. The set of equations
  $E_{\Cones}$ has the same equations as $E_{\A}$, and
  additionally, for each product cone
  $A\xleftarrow{\fst}C\xrightarrow{\snd} B$ in $\Cones$, we add the
  following three equations:
  \begin{rulelist}
  \item[fst] $\fst(\pair(x,y))\approx x : A$, whenever $x$ and $y$ are
    variables of sorts $A$ and $B$, respectively.
  \item[snd] $\snd(\pair(x,y))\approx y : A$, whenever $x$ and $y$ are
    variables of sorts $A$ and $B$, respectively.
  \item[pair] $\pair(\fst(z),\snd(z))\approx z:C$, whenever $z$ is a
    variable of sort $C$.
  \end{rulelist}
\end{definition}

We then have:

\begin{remark}\label{rem:algebras-pp-functors}
  There is a one-to-one correspondence between
  $(\Sigma_{\Cones},E_{\Cones})$-algebras and
  $\Cones$-product-pre\-serv\-ing functors $F:\A\to\Set$. The proof is
  basically the same as that of Remark~\ref{rem:algebras-functors}.
  The point is that the equations $\rulename{fst}$,
  $\rulename{snd}$, and $\rulename{pair}$ are exactly what is required
  to ensure that the cone
  $F(A)\xleftarrow{F(\fst)}F(C)\xrightarrow{F(\snd)} F(B)$ is a
  product cone in $\Set$.
\end{remark}

\section{The construction of the functor \texorpdfstring{$L$}{L}}
\label{sec:functor-L}

Given a small category $\A$ with a distinguished collection $\Cones$
of product cones as in the previous section, we write $\SetACones$ for
the full subcategory of $\SetA$ consisting of
$\Cones$-product-preserving functors. The notation $\SetAx$ used in
the introduction is a special case, where $\Cones$ is the
collection of all product cones.

In this section, we will focus on defining the functor
$L:\SetA\to\SetACones$. By Remark~\ref{rem:algebras-pp-functors}, we
know that a product-preserving presheaf corresponds to a
$(\Sigma_{\Cones},E_{\Cones})$-algebra. Thus, given any functor
$F : \A \to \Set$, we must construct a
$(\Sigma_{\Cones},E_{\Cones})$-algebra $L(F)$. This can be done
freely, as we now show.

\begin{definition}\label{def:theory-functor}
  To any functor $F:\A\to\Set$, we associate a multi-sorted theory as
  follows. The sorts, signature, and equations are the same as in
  Definitions~\ref{def:theory-a} and {\ref{def:theory-a-cones}},
  except for the following:
  \begin{itemize}
  \item For each object $A\in\A$ and each element $c\in F(A)$, we add
    a constant symbol $c:A$ to the signature.
  \item For each morphism $f:A\to B\in\A$ and each $c\in F(A)$, let
    $d=F(f)(c)$. We add an equation $d\approx f(c):B$.
  \end{itemize}
  We write the resulting signature and equation as $\Sigma_{F}$ and
  $E_{F}$, respectively.  
\end{definition}

The quotient term algebra $\CTerm(\Sigma_{F})/{E_{F}}$ still satisfies
the equations {\rulename{id}}, {\rulename{comp}}, {\rulename{fst}},
{\rulename{snd}}, and {\rulename{pair}}, and therefore, by
Remark~\ref{rem:algebras-functors}, it is a product-preserving
functor. Hence we define the functor $L:\SetA\to\SetACones$ to be the
following.

\begin{definition}\label{def:L-functor}
  We define the functor $L : \SetA \to \SetACones$ by
  \[
  L(F) = \CTerm(\Sigma_{F})/{E_{F}}.
  \]
\end{definition}

We note that $L$ is a well-defined functor, because for every natural
transformation $\alpha : F\to G \in \SetA$, we have a
$\Sigma_{\Cones}$-homomorphism $L(\alpha) : \CTerm(\Sigma_{F})/{E_{F}}
\to \CTerm(\Sigma_{G})/{E_{G}}$. This homomorphism is defined
inductively on the structure of the terms in
$\CTerm(\Sigma_{F})/{E_{F}}$, using $\alpha$ for the constant
symbols. In other words, we have $L(\alpha)(f(t_1,\ldots,t_n)) =
f(L(\alpha)(t_1),\ldots,L(\alpha)(t_n))$, where $f$ is any function
symbol from $\Sigma_{\Cones}$, and $L(\alpha)(c) = \alpha(c)$, where
$c$ is one of the constant symbols introduced in
Definition~\ref{def:theory-functor}.

\begin{theorem}
  \label{thm:reflection}
  We have an adjunction $L \dashv i$, where $i:\SetA\to\SetACones$ is
  the inclusion functor.
\end{theorem}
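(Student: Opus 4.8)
The plan is to establish the adjunction by producing, for each functor $F\in\SetA$, a universal arrow from $F$ to the inclusion $i$; that is, a unit $\eta_F\colon F\to i(L(F))$ with the appropriate universal property. First I would define $\eta_F$: its component at an object $A$ sends an element $c\in F(A)$ to the $\approx$-equivalence class of the constant symbol $c:A$ in $L(F)(A)=\CTerm(\Sigma_F)/E_F$. Naturality of $\eta_F$ is immediate from the equations added in Definition~\ref{def:theory-functor}: for $f\colon A\to B$ and $c\in F(A)$, writing $d=F(f)(c)$, we get $\eta_{F,B}(F(f)(c))=[d]$ and $L(F)(f)(\eta_{F,A}(c))=[f(c)]$, which coincide because $(d\approx f(c):B)\in E_F$. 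One also checks that $\eta$ is natural in $F$, which is immediate from the description of $L$ on natural transformations given just after Definition~\ref{def:L-functor}.

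Next I would verify the universal property. Fix $G\in\SetACones$ and a natural transformation $\alpha\colon F\to i(G)$. By Remark~\ref{rem:algebras-pp-functors}, $G$ is a $(\Sigma_\Cones,E_\Cones)$-algebra; extend it to a $\Sigma_F$-algebra by interpreting each constant symbol $c:A$ coming from $c\in F(A)$ as the element $\alpha_A(c)\in G(A)$. This extended algebra still satisfies $E_F$: the equations \rulename{id}, \rulename{comp}, \rulename{fst}, \rulename{snd}, \rulename{pair} hold since $G\in\SetACones$, and each new equation $d\approx f(c):B$ holds precisely because $\alpha$ is natural, i.e.\ $\alpha_B(F(f)(c))=G(f)(\alpha_A(c))$. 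So $G$, thus extended, is a $(\Sigma_F,E_F)$-algebra. The structural fact I would then use is that $L(F)=\CTerm(\Sigma_F)/E_F$ is the \emph{initial} $(\Sigma_F,E_F)$-algebra: there is a unique $\Sigma_F$-homomorphism $\widehat\alpha\colon L(F)\to G$, defined on closed terms by induction on term structure and descending to $\approx$-classes by soundness of the equational calculus. This $\widehat\alpha$ is in particular a $\Sigma_\Cones$-homomorphism, hence a morphism of $\SetACones$, and by construction $\widehat\alpha_A([c])=\alpha_A(c)$, so $i(\widehat\alpha)\circ\eta_F=\alpha$.

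For uniqueness, suppose $\beta\colon L(F)\to G$ in $\SetACones$ satisfies $i(\beta)\circ\eta_F=\alpha$. Then $\beta_A([c])=\alpha_A(c)$ for every constant symbol $c$, and since every closed $\Sigma_F$-term is generated from these constants using only function symbols of $\Sigma_\Cones$ (the unary symbols coming from morphisms of $\A$ and the symbols $\pair$), an induction on term structure — using that $\beta$ is a $\Sigma_\Cones$-homomorphism — forces $\beta=\widehat\alpha$. Having exhibited a universal arrow $\eta_F$ for every $F$, we conclude that $i$ has a left adjoint agreeing with $L$ on objects; the naturality of the induced bijection $\SetACones(L(F),G)\iso\SetA(F,i(G))$ and the agreement of the left adjoint's action on morphisms with $L(\alpha)$ then follow from the general theory of adjunctions.

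The step I expect to be the main obstacle is the initiality claim for $\CTerm(\Sigma_F)/E_F$ — concretely, that the map defined inductively on closed terms is well-defined on $\approx$-equivalence classes. This reduces to soundness of the deduction rules of the excerpt: if $E_F\vdash s\approx t:A$ then every $(\Sigma_F,E_F)$-algebra identifies the interpretations of $s$ and $t$. Soundness is proved by induction on derivations; the only mildly delicate case is \rulename{subst}, which is handled by a routine substitution lemma relating substitution into a term with re-evaluation of the substituted variable. A secondary point requiring care is that $\widehat\alpha$ respects the function symbols $\pair$, which is exactly where the hypothesis that $G$ is $\Cones$-product-preserving (equivalently, that it satisfies \rulename{fst}, \rulename{snd}, \rulename{pair}) is used.
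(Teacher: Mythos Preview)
Your proposal is correct and matches the paper's approach: define the unit via the constant symbols, construct $\widehat\alpha$ by induction on closed terms, verify well-definedness by soundness of the equational rules, and prove uniqueness by term induction---the paper spells these inductions out case by case while you package them as initiality of $\CTerm(\Sigma_F)/E_F$ among $(\Sigma_F,E_F)$-algebras. The one point to tighten is in your uniqueness paragraph: you invoke that $\beta$ is a $\Sigma_\Cones$-homomorphism, but a morphism of $\SetACones$ is a priori only a natural transformation (equivalently a $\Sigma_\A$-homomorphism), and that $\beta$ also preserves $\pair$ must be argued from naturality with respect to $\fst,\snd$ together with $G$ being product-preserving---this is exactly the content of the paper's $\pair$ case in its uniqueness proof.
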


\begin{proof}
  First we will define, for any $F : \A \to \Set$, a natural
  transformation $\eta_{F} : F \to i L(F)$, i.e., $\eta_{F} : F \to
  \CTerm(\Sigma_{F})/{E_{F}}$.  For any $A\in \A$ and $a \in F(A)$,
  we define $\eta_{F, A}(a) = a$.  Note that $\eta_{F}$ is natural in
  $A$ because for any $f : A \to B \in \A$ and $a \in F(A)$, we have
  \[
  (\CTerm(\Sigma_{F})/{E_{F}})(f)(\eta_{F, A}(a))
  = f(a)
  \approx b
  = \eta_{F, B}(F(f)(a)),
  \]
  where $b = F(f)(a)$.  Moreover, $\eta$ is natural in $F$ because for
  any $\alpha_{A} : F(A) \to G(A)$ and $a\in F(A)$, we have
  \[
  (L\alpha)_{A}(\eta_{F, A}(a))
  = (L\alpha)_{A}(a)
  = \alpha_{A}(a)
  = \eta_{G, A}(\alpha_{A}(a)).
  \]

  Next, we will show that for any natural transformation $\gamma : F
  \to i(G)$, there exists a unique natural transformation
  $\hat{\gamma} : L(F) \to G$ such that $\hat{\gamma}\circ \eta_{F} =
  \gamma$.  For all $A\in \A$, we define $\hat{\gamma}_{A} :
  (\CTerm(\Sigma_{F})/{E_{F}})(A) \to G(A)$ by induction on the
  structure of the terms in $(\CTerm(\Sigma_{F})/{E_{F}})(A)$.

  \begin{itemize}
  \item $\hat{\gamma}_{A}(a) = \gamma_{A}(a)$ for any $a \in F(A)$. 
  \item $\hat{\gamma}_{B}(f(t)) = G(f)(\hat{\gamma}_{A}(t))$ for any $f : A \to B \in \A, \Sigma_{F} \vdash t : A$.
    
  \item $\hat{\gamma}_{C}(\pair(s, t)) = G(\pair)(\hat{\gamma}_{A}(s),
    \hat{\gamma}_{B}(t)) $ for any cone
    $A\leftarrow C\rightarrow B$ in $\Cones$, $\Sigma_{F} \vdash s :
    A$, and $\Sigma_{F} \vdash t : B$.
  \end{itemize}
  We must show that $\hat{\gamma}_{A} : (\CTerm(\Sigma_{F})/{E_{F}})(A) \to G(A)$ is a well-defined function for all $A\in \A$. We can show this by induction on $E_{F}\vdash s \approx t : B$.
  \begin{itemize}
  \item Case
    \[
    \infer{E_F \vdash b \approx f(a) : B.}{\Sigma_{F} \vdash {a} : A & b = F(f)(a) & f : A \to B \in\A}
    \]
    In this case, we have
    \[\hat{\gamma}_{B}(b) = \gamma_{B}(b) = \gamma_{B}(F(f)(a)) \stackrel{(*)}{=} G(f)(\gamma_{A}(a)) = \hat{\gamma}_{B}(f(a)).\]
    The equality $(*)$ is by naturality of $\gamma$. 

  \item Case
    \[
    \infer{E_F \vdash \fst(\pair(s, t)) \approx s : A.}{\Sigma_F \vdash s : A & \Sigma_F \vdash t : B}
    \]
    We need to show $\hat{\gamma}_{B}(\fst(\pair(s, t))) = G(\fst)(G(\pair)(\hat{\gamma}_{A}(s), \hat{\gamma}_{B}(t))) = \hat{\gamma}_{A}(s)$. This equality holds because $G$ is a product-preserving functor.
  
\item Case
  \[
  \infer{E_F \vdash \pair(\fst(s), \mathsf{snd}(s)) \approx s : C.}{\Sigma_F \vdash s : C}
  \]
  We need to show $\hat{\gamma}_{C}(\pair(\fst(s), \mathsf{snd}(s))) = G(\pair)(G(\fst)(\hat{\gamma}_{C}(s)), G(\mathsf{snd})(\hat{\gamma}_{C}(s))) = \hat{\gamma}_{C}(s)$. This equality holds because $G$ is a product-preserving functor.
  
\item Case
  \[
  \infer{E_F \vdash f(s_1,\ldots,s_n) \approx f(t_1,\ldots,t_n) : B.}
        {E_F \vdash s_1 \approx t_1 : A_1\quad\ldots\quad
          E_F \vdash s_n \approx t_n : A_n}
  \]
  We have
  \[ \hat{\gamma}_{B}(f(s_1,\ldots,s_n))
  \stackrel{(*)}{=} G(f)(\hat{\gamma}_{A_1}(s_1),\ldots,\hat{\gamma}_{A_n}(s_n)) 
  = G(f)(\hat{\gamma}_{A_1}(t_1),\ldots,\hat{\gamma}_{A_n}(t_n))
  = \hat{\gamma}_{B}(f(t_1,\ldots,t_n)),
  \]
  where the equality $(*)$ uses the induction hypothesis.
  
\item All the other cases are proved similarly.
\end{itemize}

Now we show that $\hat{\gamma}$ is a natural transformation. Suppose
$f : A \to B \in \A$. We need to show
\[G(f)(\hat{\gamma}_{A}(t)) = \hat{\gamma}_{B}((\CTerm(\Sigma_{F})/{E_{F}})(f)(t)).\]
This is true because by the definition of $\hat{\gamma}$,
we have \[\hat{\gamma}_{B}((\CTerm(\Sigma_{F})/{E_{F}})(f)(t)) = \hat{\gamma}_{B}(f(t)) = G(f)(\hat{\gamma}_{A}(t))\] 
for any $\Sigma_{F} \vdash t : A$. 

It is obvious to verify that $\hat{\gamma}\circ \eta = \gamma$.

Lastly, we must show that $\hat{\gamma}$ is unique. Consider a natural
transformation $\hat{\gamma}\,'$ such that $\hat{\gamma}\,' \circ \eta
= \gamma$. We must show $\hat{\gamma}\,'_{A}(t) = \hat{\gamma}_{A}(t)$
for all $\Sigma_{F} \vdash t : A$. This can be shown by induction on
$\Sigma_{F} \vdash t : A$.
\begin{itemize}
\item Case
  \[
  \infer{\Sigma_{F}\vdash a : A.}{a \in F(A)}
  \]
  In this case, we have $\hat{\gamma}\,'_{A}(a) =
  \hat{\gamma}\,'_{A}(\eta_{F, A}(a)) = \gamma_{A}(a) =
  \hat{\gamma}_{A}(a)$.
  
\item Case
  \[
  \infer{\Sigma_{F}\vdash \pair(s, t) : C.}{\Sigma_{F}\vdash s : A & \Sigma_{F}\vdash t : B}
  \]
  By induction hypothesis, we have $\hat{\gamma}\,'_{A}(s) =
  \hat{\gamma}_{A}(s)$ and $\hat{\gamma}\,'_{B}(t) =
  \hat{\gamma}_{B}(t)$.  We need to show
  \[
  \hat{\gamma}\,'_{A\times B}(\pair(s, t)) = \hat{\gamma}_{A\times B}(\pair(s, t)) = G(\pair)(\hat{\gamma}_{A}(s), \hat{\gamma}_{B}(t)) = G(\pair)(\hat{\gamma}\,'_{A}(s), \hat{\gamma}\,'_{B}(t)).
  \]
  This is the case because by the naturality of $\hat{\gamma}\,'$, we have
  \[
  G(\fst)(\hat{\gamma}\,'_{C}(\pair(s, t))) = \hat{\gamma}\,'_{A}(\fst(\pair(s, t)))) = \hat{\gamma}\,'_{A}(s)
  \]
  and
  \[
  G(\mathsf{snd})(\hat{\gamma}\,'_{C}(\pair(s, t))) = \hat{\gamma}\,'_{B}(t).
  \]
  This shows that $\hat{\gamma}\,'_{C}(\pair(s, t))$ is indeed the pair $G(\pair)(\hat{\gamma}\,'_{A}(s), \hat{\gamma}\,'_{B}(t))$.

\item Case
  \[
  \infer{\Sigma_{F}\vdash f(t) : B.}
  {\Sigma_{F} \vdash t : A &
    f : A\to B
  }
  \]
  By naturality of $\hat{\gamma}\,', \hat{\gamma}$ and the induction
  hypothesis, we have
  \[
  \hat{\gamma}\,'_{B}(f(t)) =
  G(f)(\hat{\gamma}\,'_{A}(t)) = G(f)(\hat{\gamma}_{A}(t)) =
  \hat{\gamma}_{B}(f(t)).\qedhere
  \]
\end{itemize}
\end{proof}

\begin{remark}
  In the introduction, we showed that the Lambek embedding
  $\ybar:\A\to\SetAopx$ preserves all coproducts, where $\SetAopx$ is
  the subcategory of $\SetAop$ consisting of product-preserving
  functors. This result can easily be relativized, without changing
  the proof, to the case of a chosen collection of cones. Namely, if
  $\Cones$ is a collection of product cones in $\A\op$ (or,
  equivalently, coproduct cones in $\A$), then the Lambek embedding
  $\ybar:\A\to\SetAopCones$ preserves all of the coproduct cones in
  $\Cones$. Thus, the construction can be customized to preserve
  coproducts ``of interest''.
\end{remark}

\section{The subcategory of limit-preserving presheaves}
\label{sec:limit-preserve}

Our method of showing that the subcategory of
$\mathcal{C}$-product-preserving functors is reflective can also be
adapted to functors that preserve a given class of limits (not
necessarily products). A small complication is that, in Freyd's
terminology, the notion of general limits is not \emph{algebraic} but
only \emph{essentially algebraic} {\cite{freyd1972aspects}}.  This
means that the domain of some operations is defined in terms of
equations. In the following, we sketch the proof in the case of
equalizers, but the same method works for other kinds of limits as
well.

\begin{definition}
\label{def:theory-eq-cones}
  Consider a small category $\A$ with a distinguished collection
  $\Cones$ of limits (we focus on equalizers for simplicity).  To
  this, we associate a multi-sorted theory as follows. The signature
  $\Sigma_{\Cones}$ has the same function symbols as $\Sigma_{\A}$,
  and additionally, for each equalizer $E \stackrel{e}{\to} A
  \stackrel{f,g}{\rightrightarrows} B$ in $\Cones$, we add a function
  symbol
  \[
  \eql : A \to E.
  \]
  Its term formation rule is slightly different than that of other
  function symbols (and for this reason, the resulting theory is not
  strictly speaking a multi-sorted algebraic theory in the sense of
  Section~\ref{sec:background}, but rather what should be called a
  multi-sorted \emph{essentially algebraic} theory):
  \[
    \infer{\Sigma_{\Cones} \vdash \eql(t) : E}
    {\Sigma_{\Cones}\vdash t : A  
      & f,g : A\to B 
      & E_{\Cones} \vdash f(t) \approx g(t):B
    }
  \]
  The set of equations $E_{\Cones}$ has the same equations as
  $E_{\A}$, and additionally, for each equalizer $E \stackrel{e}{\to}
  A \stackrel{f,g}{\rightrightarrows} B$ in $\Cones$, we add the
  following two equations:
  \begin{rulelist}
  \item[beta] $e(\eql(t))\approx t : A$, whenever
    $\Sigma_{\Cones}\vdash t:A$ and $E_{\Cones} \vdash f(t) \approx g(t) : B$.
  \item[eta] $x \approx \eql(e(x)) : E$, whenever $x$ is a
    variable of sort $E$.
  \end{rulelist}  
\end{definition}

Note that Definition~\ref{def:theory-eq-cones} introduces an apparent
circularity, because unlike Definition~\ref{def:theory-a} and
{\ref{def:theory-a-cones}}, the well-sortedness judgement
$\Sigma_{\Cones}\vdash t:E$ and the set of equations $E_{\Cones}$ are
now defined in terms of each other. Of course this is not an actual
circularity; it just means that these two items are defined by
simultaneous induction. Similarly, the notion of a
$(\Sigma_{\Cones},E_{\Cones})$-algebras must be adjusted so that
$\T(\eql):\T(A)\to\T(E)$ is a partial function that is defined and
satisfies $\T(e)(\T(\eql)(x))=x$ for those elements $x\in\T(A)$
satisfying $\T(f)(x)=\T(g)(x)$. With these adjustments, we have the
following:

\begin{remark}\label{rem:algebras-lp-functors}
  Similarly to Remark~\ref{rem:algebras-pp-functors}, we can show that
  there is a one-to-one correspondence between
  $(\Sigma_{\Cones},E_{\Cones})$-algebras and
  $\Cones$-limit-preserving functors $F:\A\to\Set$. The equations
  $\rulename{beta}$ and $\rulename{eta}$ ensure that
  \[ F(E)
  \stackrel{F(e)}{\to} F(A)
  \stackrel{F(f),F(g)}{\rightrightarrows} F(B)
  \]
  is an equalizer in $\Set$. To show that the full subcategory
  $\SetACones$ of $\Cones$-limit-preserving functors is reflective, we
  first define the functor $L : \SetA \to \SetACones$, similarly to
  Definition~\ref{def:L-functor}.  Then we can show that $L$ is a left
  adjoint of the inclusion functor, by adapting the proof of
  Theorem~\ref{thm:reflection}.
\end{remark}

\section{Conclusion}

We gave a brief introduction to the Lambek embedding, a version of the
Yoneda embedding that preserves coproducts (or more generally, a
chosen class of distinguished coproducts or colimits). The Lambek
embedding is obtained by restricting the codomain of the Yoneda
embedding to a suitable full subcategory of $\SetAop$, namely, the
full subcategory of product-preserving functors (or more generally,
functors that preserve the distinguished class of products or
limits). This is a reflective subcategory of $\SetAop$. Like $\SetAop$
itself, this subcategory is complete and cocomplete, as well as
monoidal closed provided that $\A$ is monoidal. Our method uses
concepts from multi-sorted algebras. In particular, we observed that
there is a one-to-one correspondence between product-preserving
functors and certain multi-sorted algebras.  We gave a direct
syntactic construction of the functor $L$ and proved that it is left
adjoint to the inclusion functor.

\section*{Acknowledgements}

This work was supported by the Natural Sciences and Engineering
Research Council of Canada (NSERC) and by the Air Force Office of
Scientific Research under Award No.\@ FA9550-21-1-0041.

\bibliographystyle{eptcs}
\bibliography{lambek}

\end{document}